\hfill \footnotesize {\rm M. Eshaghi-Gordji and     H. Khodaie
} \hfill
\hfill \footnotesize {\rm The stability of quadratic type ...}  \hfill$~$}
\begin{document}
\thispagestyle{empty}
 \setcounter{page}{1}

\begin{center}
{\large\bf The stability of a quadratic type functional equation
with the fixed point alternative\vskip.20in
{\bf  M. Eshaghi Gordji } \\[2mm]

{\footnotesize Department of Mathematics,
Semnan University,\\ P. O. Box 35195-363, Semnan, Iran\\
[-1mm] e-mail: {\tt  madjid.eshaghi@gmail.com}}

{\bf  H. Khodaie  } \\[2mm]

{\footnotesize Department of Mathematics,
Semnan University,\\ P. O. Box 35195-363, Semnan, Iran\\
[-1mm] e-mail: {\tt  khodaie.ham@gmail.com}}}
\end{center}
\vskip 5mm

\noindent{\footnotesize{\bf Abstract.}In this paper, we achieve
the general solution and the generalized Hyers-Ulam-Rassias
stability for the quadratic type functional equation
\begin{align*}&f(x+y+2cz)+f(x+y-2cz)+c^2f(2x)+c^2f(2y)\\&=2[f(x+y)+c^2f(x+z)+c^2f(x-z)+c^2f(y+z)+c^2f(y-z)]
 \hspace {2.6 cm}
\end{align*}
 for fixed integers $c$ with $c\neq0,\pm1$, ~by using the fixed
 point alternative.

 \vskip.10in
 \footnotetext { 2000 Mathematics Subject Classification: 39B82,
 39B52.}
 \footnotetext { Keywords:stability, quadratic function, fixed point alternative.}

  \newtheorem{df}{Definition}[section]
  \newtheorem{rk}[df]{Remark}
   \newtheorem{lem}[df]{Lemma}
   \newtheorem{thm}[df]{Theorem}
   \newtheorem{pro}[df]{Proposition}
   \newtheorem{cor}[df]{Corollary}
   \newtheorem{ex}[df]{Example}

 \setcounter{section}{0}
 \numberwithin{equation}{section}

\vskip .2in

\begin{center}
\section{Introduction}
\end{center}  In 1940, S. M. Ulam [26]
proposed the stability problem for functional equations in the
following question regarding to the stability of group homomorphism:

 Let $(G_1,.)$ be a group and let $(G_2,*)$ be a
metric group with the metric $d(.,.).$ Given $\epsilon >0$, dose
there exist a $\delta
>0$, such that if a mapping $h:{G_1}\longrightarrow{G_2}$ satisfies the
inequality $d(h(x.y),h(x)*h(y)) <\delta,$ for all $x,y\in G_1$,
then there exists a homomorphism $H:G_1\longrightarrow G_2$ with
$d(h(x),H(x))<\epsilon,$ for all $x\in G_1?$ In other words, under
what conditions dose a homomorphism exist near an approximately
homomorphism? Generally, the concept of stability for a functional
equation comes up when we the functional equation is replaced by
an inequality which acts as a perturbation of that equation. D. H.
Hyers [9] answered to the question affirmatively in 1941 so if
$f:{E}\longrightarrow{E'}$ such that
$$\|f(x+y)-f(x)-f(y)\|\leq \delta,$$ for all $x,y\in E,$ and for
some $\delta>0$ where E, $E'$ are Banach spaces; then there exists
a unique additive mapping $T:{E}\longrightarrow{E'}$ such that
$$\|f(x)-T(x)\|\leq \delta,$$
for all $x\in E.$ However if $f(tx)$ is a continuous mapping at
$t$ for each fixed $x\in E$ then T is linear. In 1978, Th. M.
Rassias [22] provided a generalization of Hyers's Theorem, which
allows the Cauchy difference to be unbounded.  This new concept is
known as Hyers-Ulam-Rassias stability of functional equations (see
[1,3], [5-10], [20,21]).

The functional equation $$f(x+y)+f(x-y)=2f(x)+2f(y),\eqno \hspace
{0.5
 cm}(1.1)$$
 is related to symmetric bi-additive function and is called a quadratic functional
 equation naturally and every  solution of the quadratic equation (1.2) is
 said to be a quadratic function particulary. It is well known that a function
 $f$ between two real vector spaces is quadratic if and only if there
 exists a unique symmetric bi-additive function $B$ such that
 $f(x)=B(x,x)$ for all $x$
where $$B(x,y)=\frac{1}{4}(f(x+y)-f(x-y)),\eqno \hspace {0.5
 cm}$$ (see [1,14]).
Skof proved Hyers-Ulam-Rassias stability problem for quadratic
functional equation (1.2) for a class of functions
 $f:A\longrightarrow B$, where A is normed space and B is a Banach
space (see [25]). Cholewa [3] noticed that  Skof's Theorem  is
still true if relevant domain $A$ alters to  an abelian group. In
1992, Czerwik proved the  Hyers-Ulam-Rassias stability of the
equation (1.1) (see [4]) and four years later, Grabiec [7]
generalized the result mentioned above. Jun and Kim [11]
introduced the following functional equation
 $$f(2x+y)+f(2x-y)=2f(x+y)+2f(x-y)+12f(x),\eqno \hspace {0.5
 cm}$$
and established the general solution and the generalized
Hyers-Ulam-Rassias stability for the  functional equation (1.3) in
2002. Obviously,  the $f(x)=ax^3$ satisfies the functional
equation (1.3), which is called a cubic functional equation. Every
solution of the cubic functional equation is said to be a cubic
function. Jun and Kim proved also that  a function
 $f$ between two real vector spaces X and Y is a solution of (1.3) if and only if there
 exists a unique function $C:X\times X\times X\longrightarrow Y$ such that
 $f(x)=C(x,x,x)$ for all $x\in X,$ also, $C$ is symmetric for each
 fixed one variable and is additive for fixed two variables.

 More over, Y-S. Jung and I-S. Chang [13] proved Hyers-Ulam-Rassias stability of
the following new cubic type functional equation
\begin{align*}&f(x+y+2z)+f(x+y-2z)+f(2x)+f(2y)\\&=2[f(x+y)+2f(x+z)+2f(x-z)+2f(y+z)+2f(y-z)]
 , \hspace {3.7 cm}
\end{align*}
 by using the fixed point alternative. Several functional
 equations
 have been investigated in [12,18,19,23,24].

 Now, we introduce the following new quadratic type functional equation for fixed
integers $c$ with $c \neq0,\pm1$,
\begin{align*}&f(x+y+2cz)+f(x+y-2cz)+c^2f(2x)+c^2f(2y)\\&=2[f(x+y)+c^2f(x+z)+c^2f(x-z)+c^2f(y+z)+c^2f(y-z)]
 , \hspace {2.7 cm}(1.2)
\end{align*}

In this paper, we establish the general solution and then the
generalized Hyers-Ulam-Rassias stability problem for (1.2) by
using the fixed point alternative [15] as in [17].

\vskip 5mm
\section{Solution of (1.2)}
   Let $X$ and $Y$ be real vector spaces. We here present the general
   solution of (1.2).

\begin{thm}\label{t2}A function $f:{X}\rightarrow{Y}$ satisfies the functional
equation (1.1) if and only if $f:{X}\rightarrow{Y}$ satisfies the
functional equation (1.2). Therefore, every solution of functional
equations (1.2) is also a quadratic function.

\end{thm}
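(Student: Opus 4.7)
The plan is to prove both directions of the equivalence. Direction $(1.1) \Rightarrow (1.2)$ is routine: if $f$ is quadratic then $f(2u) = 4f(u)$ and $f(u+v) + f(u-v) = 2f(u) + 2f(v)$, so applying the latter to the pairs $(x+y, 2cz)$, $(x,z)$, $(y,z)$ together with $f(2cz) = 4c^2 f(z)$ reduces both sides of (1.2) to the common expression $2f(x+y) + (4+4c^2)[f(x)+f(y)] + 8c^2 f(z)$.

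For the substantive direction $(1.2) \Rightarrow (1.1)$, I first extract basic consequences of (1.2). Putting $x = y = z = 0$ gives $6c^2 f(0) = 0$, hence $f(0) = 0$ since $c \neq 0$; putting $y = x$, $z = 0$ gives $f(2x) = 4f(x)$. I then decompose $f = \tfrac{1}{2}(g + h)$ into even and odd parts $g(x) = f(x)+f(-x)$ and $h(x) = f(x)-f(-x)$. Applying (1.2) at $(-x, -y, -z)$ and adding to, resp.\ subtracting from, the original (1.2) shows that $g$ and $h$ each satisfy (1.2) individually.

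For the even part I substitute $y = -x$ in (1.2) for $g$; using $g$ even and $g(0) = 0$, this reduces to $g(2cz) + c^2 g(2x) = 2c^2[g(x+z) + g(x-z)]$. Setting $x = 0$ and invoking $g(2u) = 4g(u)$ yields $g(cz) = c^2 g(z)$, and plugging this back produces $g(x+z) + g(x-z) = 2g(x) + 2g(z)$, so $g$ satisfies (1.1). The delicate part is showing $h \equiv 0$. The one-parameter reduction $h(x+cz) + h(x-cz) = 2(1-c^2)h(x) + c^2[h(x+z)+h(x-z)]$ obtained from $y = x$ is satisfied by every additive function, so alone it cannot rule out nontrivial odd solutions. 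The key is to apply (1.2) for $h$ at $(x, y, z)$ and at $(0, x+y, z)$ and subtract; after using $h(0) = 0$, $h(2u) = 4h(u)$ and oddness, one obtains the additive-type identity
\begin{equation*}
 \Psi(x+y, z) = \Psi(x, z) + \Psi(y, z), \qquad \Psi(x, z) := h(x+z) + h(x-z) - 2h(x),
\end{equation*}
so $\Psi(\cdot, z)$ is a Cauchy function of its first argument for every fixed $z$.

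Direct expansion gives $\Psi(z, z) = h(2z) - 2h(z) = 2h(z)$, so by additivity $\Psi(2z, z) = 4h(z)$ and $\Psi(3z, z) = 6h(z)$. On the other hand $\Psi(2z, z) = h(3z) + h(z) - 2h(2z) = h(3z) - 7h(z)$ forces $h(3z) = 11h(z)$, while $\Psi(3z, z) = h(4z) + h(2z) - 2h(3z) = 20h(z) - 2h(3z)$ forces $h(3z) = 7h(z)$. Comparing the two values gives $4h(z) = 0$, hence $h \equiv 0$ in the real vector space $Y$, and therefore $f = g/2$ inherits (1.1) from $g$. The main obstacle is precisely this last step: the natural one-variable reduction admits linear odd solutions, so one needs the $(0, x+y, z)$ comparison, whose output is additivity of $\Psi$ rather than a quadratic identity, and only by combining that additivity with two independent direct evaluations of $\Psi$ does one extract the torsion relation that kills $h$.
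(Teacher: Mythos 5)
Your proof is correct, and it takes a genuinely different route from the paper's in both directions. For $(1.1)\Rightarrow(1.2)$ the paper does not verify the identity directly: it builds a long chain of auxiliary identities in two integer parameters $a,b$ (its equations (2.4)--(2.20)) and only at the end sets $c=ab$; your direct substitution via the parallelogram law and $f(2cz)=4c^{2}f(z)$ is much shorter and equally rigorous. (One bookkeeping slip: the common value of the two sides is $2f(x+y)+4c^{2}[f(x)+f(y)]+8c^{2}f(z)$, not $2f(x+y)+(4+4c^{2})[f(x)+f(y)]+8c^{2}f(z)$; this does not affect the conclusion, since both sides do reduce to the same expression.) For the converse the paper never splits $f$ into even and odd parts: it proves $f(-cz)=f(cz)$ by establishing, by induction, the family $c^{2}[f((c+k)z)+f((c-k)z)]=2(c^{2}+k^{2})f(cz)$ and specializing $k=3c$, and then derives (1.1) from the substitutions $y=0$ and $x\leftrightarrow z$, dividing by $2c^{2}-2$ at the last step. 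Your decomposition $f=\tfrac12(g+h)$ is cleaner and more modular: the even part is dispatched by the single substitution $y=-x$, and the odd part is killed by the observation --- which I verified --- that comparing (1.2) at $(x,y,z)$ and at $(0,x+y,z)$ yields the additivity $\Psi(x+y,z)=\Psi(x,z)+\Psi(y,z)$ for $\Psi(x,z)=h(x+z)+h(x-z)-2h(x)$, after which the two evaluations $\Psi(2z,z)$ and $\Psi(3z,z)$ give the incompatible relations $h(3z)=11h(z)$ and $h(3z)=7h(z)$, so $4h\equiv0$ and hence $h\equiv0$ in the real vector space $Y$. You also correctly flag why the naive $y=x$ reduction cannot work (it admits all additive functions), a subtlety the paper sidesteps by proving evenness first. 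A side benefit of your argument is that the converse nowhere uses $c\neq\pm1$, only $c\neq0$, so your proof in fact establishes a slightly stronger statement than the paper's.
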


\begin{proof}Let $f$ satisfy the functional
equation (1.1). Putting $x=y=0$ in (1.1), we get $f(0)=0$. Set
$x=0$ in (1.1) to get $f(-y)=f(y)$. Letting $y=x$ and $y=2x$ in
(1.1), respectively, we obtain that $f(2x)=4f(x)$ and
$f(3x)=9f(x)$ for all $x \in X$. By induction, we lead to
$f(kx)=k^2f(x)$ for all positive integer $k$, since $f$ is even
and $f(0)=0$ so $f(kx)=k^2f(x)$ for any integer $k$. Replacing $x$
and $y$ by $2x+y$ and $2x-y$ in (1.1), respectively, gives
$$f(2x+y)+f(2x-y)_=8f(x)+2f(y),
\eqno \hspace {6.5cm} (2.1)$$ using (1.1) and (2.1), we lead to
$$f(2x+y)+f(2x-y)=f(x+y)+f(x-y)+6f(x). \eqno \hspace {4.5cm}
(2.2)$$Now, replacing $x$ and $y$ by $3x+y$ and $3x-y$ in (1.1),
respectively, then using (1.1), we
have$$f(3x+y)+f(3x-y)=f(x+y)+f(x-y)+16f(x). \eqno \hspace {4.5cm}
(2.3)$$ By using the above method, by induction, we infer that
$$f(ax+y)+f(ax-y)=f(x+y)+f(x-y)+2(a^2-1)f(x). \eqno \hspace {4.2cm}
(2.4)$$
 for all $x, y\in X$ and each positive
integer $a>1$ and for a negative integer $a<-1$, replacing $a$ by
$-a$ one can easily prove the validity of (2.4). Therefore (1.1)
implies (2.4) for any integer $a\neq0,\pm1$. First, it is noted
that Eq. (2.4) also implies the following equation
$$f(bx+y)+f(bx-y)=f(x+y)+f(x-y)+2(b^2-1)f(x), \eqno \hspace {4.2cm}
(2.5)$$
 for all integers $b\neq0,\pm1$.
 Substituting $y$ with $by$ into
(2.5), we get by using the identity $f(bx)=b^2f(x)$,
$$f(x+by)+f(x-by)=b^2f(x+y)+b^2f(x-y)+2(1-b^2)f(x). \eqno \hspace {3.7cm}
(2.6)$$ Replacing $y$ by $by$ in (2.4), we observe that
$$f(ax+by)+f(ax-by)=f(x+by)+f(x-by)+2(a^2-1)f(bx). \eqno \hspace {3.5cm}
(2.7)$$ Hence, according to (2.6) and (2.7), we get
$$f(ax+by)+f(ax-by)=b^2f(x+y)+b^2f(x-y)+2(a^2-b^2)f(x), \eqno \hspace {3cm}
(2.8)$$ for any integers $a$,$b$ with $a \neq0$, $b \neq0$ and $a
\neq\pm b$. From now on, assume that $a \neq0$, $b \neq0$ and $a
\neq\pm b$, putting $x=x+y$ and $y=x-y$ in (2.8) and then by using
the identity $f(2x)=4f(x)$, we have
$$f[(a+b)x+(a-b)y]+f[(a-b)x+(a+b)y]=4b^2(f(x)+f(y))+2(a^2-b^2)f(x+y). \eqno \hspace {0cm}
(2.9)$$ Replacing $x$ and $y$ by $x+abz$ and $y+abz$ in (2.9),
respectively, one gets that
\begin{align*}&f[(a+b)x+(a-b)y+2a^2bz]+f[(a-b)x+(a+b)y+2a^2bz]\\&=4b^2(f(x+abz)+f(y+abz))
+2(a^2-b^2)f(x+y+2abz) . \hspace {3.4cm}(2.10)
\end{align*}
Also, Replacing $x$ and $y$ by $x-abz$ and $y-abz$ in (2.9),
respectively, one gets that
\begin{align*}&f[(a+b)x+(a-b)y-2a^2bz]+f[(a-b)x+(a+b)y-2a^2bz]\\&=4b^2(f(x-abz)+f(y-abz))
+2(a^2-b^2)f(x+y-2abz) . \hspace {3.4cm}(2.11)
\end{align*}
Now, by adding (2.10) and (2.11), we arrive at
\begin{align*}f&[(a+b)x+(a-b)y+2a^2bz]+f[(a+b)x+(a-b)y-2a^2bz]\\&
\hspace {.3cm}+f[(a-b)x+(a+b)y+2a^2bz]+f[(a-b)x+(a+b)y-2a^2bz]
\\&=4b^2(f(x+abz)+f(x-abz)+f(y+abz)+f(y-abz))\\&
\hspace {.3cm}+2(a^2-b^2)(f(x+y+2abz)+f(x+y-2abz)). \hspace
{4.4cm}(2.12)
\end{align*}
On the other hand, we substitute $x=x+abz$ and $y=y-abz$ in (2.9),
we obtain
\begin{align*}&f[(a+b)x+(a-b)y+2ab^2z]+f[(a-b)x+(a+b)y-2ab^2z]\\&=4b^2(f(x+abz)+f(y-abz))
+2(a^2-b^2)f(x+y) . \hspace {4.6cm}(2.13)
\end{align*}
And putting $x=x-abz$ and $y=y+abz$ in (2.9), we get
\begin{align*}&f[(a+b)x+(a-b)y-2ab^2z]+f[(a-b)x+(a+b)y+2ab^2z]\\&=4b^2(f(x-abz)+f(y+abz))
+2(a^2-b^2)f(x+y) . \hspace {4.6cm}(2.14)
\end{align*}
Adding (2.13) to (2.14), we lead to
\begin{align*}f&[(a+b)x+(a-b)y+2ab^2z]+f[(a+b)x+(a-b)y-2ab^2z]\\&
\hspace {.3cm}+f[(a-b)x+(a+b)y+2ab^2z]+f[(a-b)x+(a+b)y-2ab^2z]
\\&=4b^2(f(x+abz)+f(x-abz)+f(y+abz)+f(y-abz))\\&
\hspace {.3cm}+4(a^2-b^2)f(x+y). \hspace {8.6cm}(2.15)
\end{align*}
Now, replacing $z$ by $\frac{a}{b}z$ in (2.15), gives
\begin{align*}f&[(a+b)x+(a-b)y+2a^2bz]+f[(a+b)x+(a-b)y-2a^2bz]\\&
\hspace {.3cm}+f[(a-b)x+(a+b)y+2a^2bz]+f[(a-b)x+(a+b)y-2a^2bz]
\\&=4b^2(f(x+a^2z)+f(x-a^2z)+f(y+a^2z)+f(y-a^2z))\\&
\hspace {.3cm}+4(a^2-b^2)f(x+y). \hspace {8.6cm}(2.16)\hspace
{.2cm}
\end{align*}
 If we compare (2.12) with (2.16), we conclude that
\begin{align*}4&b^2(f(x+abz)+f(x-abz)+f(y+abz)+f(y-abz))\\&
\hspace {.3cm}+2(a^2-b^2)(f(x+y+2abz)+f(x+y-2abz))
\\&=4b^2(f(x+a^2z)+f(x-a^2z)+f(y+a^2z)+f(y-a^2z))\\&
\hspace {.3cm}+4(a^2-b^2)f(x+y). \hspace {8.6cm}(2.17)
\end{align*}
By utilizing from equation (2.8), if $a\neq \pm1$, we get
\begin{align*}f&(x+a^2z)+f(x-a^2z)+f(y+a^2z)+f(y-a^2z)\\&
=a^4(f(x+z)+f(x-z)+f(y+z)+f(y-z))+2(1-a^4)(f(x)+f(y)). \hspace
{1.35cm}(2.18)
\end{align*}
Also, since $a \neq\pm b$ so $ab\neq\pm1$, thus by using (2.8), we
have
\begin{align*}f&(x+abz)+f(x-abz)+f(y+abz)+f(y-abz)\\&
=a^2b^2(f(x+z)+f(x-z)+f(y+z)+f(y-z))+2(1-a^2b^2)(f(x)+f(y)).
\hspace {0.7cm}(2.19)
\end{align*}
Hence, according to (2.17) and (2.18) and (2.19), we obtain that
\begin{align*}f&(x+y+2abz)+f(x+y-2abz)
\\&=2f(x+y)+2a^2b^2(f(x+z)+f(x-z)+f(y+z)+f(y-z))\\&
\hspace {.3cm}-4a^2b^2(f(x)+f(y)), \hspace {8.3cm}(2.20)
\end{align*}
for any integers $a$,$b$ with $a \neq0$, $b \neq0$ and $a \neq \pm
1, \pm b$. Setting $c$ instead of $ab$ in (2.20), we get the
relation for any integers $c$ with $c \neq0,\pm1$
\begin{align*}f&(x+y+2cz)+f(x+y-2cz)
\\&=2f(x+y)+2c^2(f(x+z)+f(x-z)+f(y+z)+f(y-z))-4c^2(f(x)+f(y)), \hspace {1cm}
\end{align*}
which, in view of the identity $f(2x)=4f(x)$, gives
\begin{align*}f&(x+y+2cz)+f(x+y-2cz)+c^2f(2x)+c^2f(2y)\\&=2[f(x+y)+c^2f(x+z)+c^2f(x-z)+c^2f(y+z)+c^2f(y-z)]
 , \hspace {3 cm}
\end{align*}
for all $x,y,z\in X$ and any integers $c$ with $c \neq0,\pm1$
which implies that $f$ satisfies (1.2).

 Conversely, suppose that $f$ satisfies (1.2) for fixed
integers $c$ with $c \neq0,\pm1$. Setting $y=z=0$ in (1.2) to get
$f(2x)=4f(x)$ for all $x\in X$, so we can say $f(4x)=16f(x)$.
Replacing $x$ and $y$ by $cz$ and $cz$, respectively, we obtain
$$f(4cz)=2f(2cz)+2c^2[2f((c+1)z)+2f((c-1)z)]-2c^2f(2cz),\eqno \hspace{3.3cm}$$
by using the identities $f(2x)=4f(x)$ and $f(4x)=16f(x)$, we have
$$c^2[f((c+1)z)+f((c-1)z)]=2(c^2+1)f(cz).\eqno \hspace{5.2cm}(2.21)$$
And replacing $x$ and $y$ by $(c+1)z$ and $(c-1)z$, respectively,
we get
\begin{align*}
f(4cz)=2&f(2cz)+2c^2[f((c+2)z)+f((c-2)z)+2f(cz)]\\&-c^2[f(2(c+1)z)+f(2(c-1)z)],
\hspace {6.3cm}
\end{align*}
now, by using the identities and (2.21), we see that
$$c^2[f((c+2)z)+f((c-2)z)]=2(c^2+4)f(cz).\eqno \hspace{5.1cm}(2.22)$$
By using the above method, by induction, we infer that
$$c^2[f((c+k)z)+f((c-k)z)]=2(c^2+k^2)f(cz),\eqno \hspace{5cm}(2.23)$$
 for all $z\in X$ and each positive integer $k\geq1$ for a
 negative integer $k\leq-1$, replacing $k$ by $-k$ one can easily
 prove validity of (2.23). Therefore, (1.2) implies (2.23) for any
 integer $k\neq0$. Substituting $k$ by $3c$ in (2.23) and dividing
 it by $c^2,$ and then by using the identities to get
 $$f(-cz)=f(cz),\eqno \hspace{6cm}$$ for all $z\in X,$ so $f$ is even. Letting $x=y=0$ in
 (1.2) and then using the evenness of $f$, gives $f(2cz)=4c^2f(z)$
 for all $z\in X$. Set $y=0$ in (1.2) and using the identity
 $f(2x)=4f(x)$ and then applying the evenness of $f$, we deduce
 that
$$f(x+2cz)+f(x-2cz)=2c^2f(x+z)+2c^2f(x-z)+4c^2f(z)+2(1-2c^2)f(x)
 . \hspace {.6 cm}(2.24)$$
Setting $2cx$ instead of $x$ in (2.24) and then using the identity
$f(2cx)=4c^2f(x)$, reduces to
$$f(2cx+z)+f(2cx-z)=2f(x+z)+2f(x-z)-4(1-2c^2)f(x)-2f(z)
 . \hspace {1.4 cm}(2.25)$$
Interchange $x$ and $z$ in (2.24), one gets
$$f(z+2cx)+f(z-2cx)=2c^2f(z+x)+2c^2f(x-z)+4c^2f(x)+2(1-2c^2)f(z)
 , \hspace {.5 cm}(2.26)$$
since $f$ is even so from (2.26), we lead to
$$f(2cx+z)+f(2cx-z)=2c^2f(x+z)+2c^2f(x-z)+4c^2f(x)+2(1-2c^2)f(z)
 . \hspace {.4 cm}(2.27)$$
Thus, from (2.25) and (2.27), we obtain that
$$(2c^2-2)f(x+z)+(2c^2-2)f(x-z)=(4c^2-4)f(x)+(4c^2-4)f(z)
 , \hspace {1.8 cm}(2.28)$$
but since $c\neq0,\pm1$ so from (2.28), we have
$$f(x+z)+f(x-z)=2f(x)+2f(z),\eqno \hspace{5.5cm}$$
for all $x,z\in X$. This completes the proof of the Theorem.
\end{proof}

\begin{thm}\label{t2}[15].(the alternative of fixed point.)
Suppose that we are given a complete generalized metric space
$(\Omega,d)$ and a strictly contractive mapping
$T:\Omega\rightarrow\Omega$ with Lipschitz constant $L$. Then, for
each given $x\in\Omega$, either

$d(T^n x, T^{n+1} x)=\infty~$ for all $n\geq0,$\\
 or other exists a natural number $n_{0}$ such that\\
$\star\hspace{.25cm} d(T^n x, T^{n+1} x)<\infty ~$for all $n \geq n_{0};$ \\
 $\star\hspace{.1cm}$ sequence $\{T^n x\}$ is convergent to a fixed point $y^*$ of $~T$;\\
$\star\hspace{.2cm} y^*$is the unique fixed point of $~T$ in the
set $~\Lambda=\{y\in\Omega:d(T^{n_{0}} x, y)<\infty\};$\\
$\star\hspace{.2cm} d(y,y^*)\leq\frac{1}{1-L}d(y, Ty)$ for all
$~y\in\Lambda.$
\end{thm}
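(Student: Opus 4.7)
The plan is to prove the standard dichotomy for strict contractions on a generalized metric space. First I would dispose of the trivial alternative: if $d(T^n x, T^{n+1} x) = \infty$ for every $n \geq 0$, the first conclusion already holds. Otherwise pick the minimal $n_0$ with $d(T^{n_0} x, T^{n_0+1} x) < \infty$. Using $d(Tu, Tv) \leq L\, d(u, v)$ inductively, I would establish
\begin{equation*}
d(T^n x, T^{n+1} x) \leq L^{n - n_0}\, d(T^{n_0} x, T^{n_0+1} x) < \infty
\end{equation*}
for every $n \geq n_0$, which simultaneously secures finiteness and gives a geometric bound on consecutive-orbit distances.

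Next I would show that $\{T^n x\}_{n \geq n_0}$ is $d$-Cauchy. For $m > n \geq n_0$, the (generalized) triangle inequality combined with the estimate above yields
\begin{equation*}
d(T^n x, T^m x) \leq \sum_{k=n}^{m-1} L^{k-n_0}\, d(T^{n_0} x, T^{n_0+1} x) \leq \frac{L^{n-n_0}}{1-L}\, d(T^{n_0} x, T^{n_0+1} x),
\end{equation*}
which tends to $0$ as $n \to \infty$. By completeness of $(\Omega, d)$ the orbit has a limit $y^* \in \Omega$, and the contractivity of $T$ makes it continuous on any set of points at finite distance, so $T y^* = \lim_{n \to \infty} T^{n+1} x = y^*$; that is, $y^*$ is a fixed point.

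For the uniqueness claim on $\Lambda = \{y \in \Omega : d(T^{n_0} x, y) < \infty\}$, I would take a second fixed point $y \in \Lambda$ and observe that $d(y, y^*) \leq d(y, T^{n_0} x) + d(T^{n_0} x, y^*) < \infty$ (the second summand is finite because $y^*$ is the $d$-limit of $T^n x$). Then $d(y, y^*) = d(Ty, T y^*) \leq L\, d(y, y^*)$ together with $L < 1$ forces $d(y, y^*) = 0$. The final quantitative bound is immediate: writing $d(y, y^*) \leq d(y, Ty) + d(Ty, T y^*) \leq d(y, Ty) + L\, d(y, y^*)$ and rearranging gives $d(y, y^*) \leq (1-L)^{-1} d(y, Ty)$ for every $y \in \Lambda$.

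The main obstacle I expect is keeping careful bookkeeping of which pairs of points sit at finite $d$-distance: in a \emph{generalized} metric space $d$ may take the value $+\infty$, so the triangle inequality produces useful information only when at least one of the intermediate distances is already known to be finite. This is precisely why the argument must be anchored at the index $n_0$, why $\Lambda$ is defined as a ``finite-distance component'' of $T^{n_0} x$, and why each appeal to triangle inequality above is paired with an explicit finiteness check.
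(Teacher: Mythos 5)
The paper does not prove this statement at all: it is quoted from Margolis and Diaz [15] as a known tool, so there is no in-paper argument to compare against. Your proof is the standard and correct argument for the Diaz--Margolis fixed point alternative: the dichotomy via the minimal index $n_0$, the geometric estimate $d(T^n x,T^{n+1}x)\le L^{n-n_0}d(T^{n_0}x,T^{n_0+1}x)$, the resulting Cauchy property and identification of the limit as a fixed point via continuity of $T$, and the finiteness bookkeeping that lets you cancel $L\,d(y,y^*)$ in the uniqueness and error-bound steps are all handled properly. The one point worth making fully explicit is that $d(T^{n_0}x,y^*)<\infty$ (needed to place $y^*$ in the finite-distance component and to justify the rearrangement at the end) follows by letting $m\to\infty$ in your estimate $d(T^{n_0}x,T^m x)\le\frac{1}{1-L}\,d(T^{n_0}x,T^{n_0+1}x)$, which your parenthetical remark essentially covers.
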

Utilizing the above-mentioned fixed point alternative, we now
obtain our main result, i.e., the generalized Hyers-Ulam-Rassias
stability of the functional equation (1.2).

From this point on, let $X$ be a real vector space and let $Y$ be
a Banach space. Before taking up the main subject, we define the
difference operator $\Delta_f:X\times X\times X \rightarrow Y$ by
\begin{align*}\Delta_f(x,y,z)=f&(x+y+2cz)+f(x+y-2cz)+c^2f(2x)+c^2f(2y)
\\&-2[f(x+y)+c^2f(x+z)+c^2f(x-z)+c^2f(y+z)+c^2f(y-z)], \hspace {1.5cm}
\end{align*}for all $x,y,z \in X$ and each fixed integers $c$
such that $c\neq0,\pm1$ where $f$ is a given $f:X\rightarrow Y.$
\begin{thm}\label{t2}Suppose that $j\in\{-1,1\}$ be fixed, and Let $f:X\rightarrow Y$ a function with
$f(0)=0$ for which there exists a function $\varphi:X\times
X\times X \rightarrow [0,\infty)$ such that
$$\lim_{n\rightarrow\infty}\frac{1}{2^{2nj}}\varphi(2^{nj}x,2^{nj}y,2^{nj}z)=0 ,\eqno \hspace{6cm}(2.29)$$
$$\|\Delta_f(x,y,z)\|\leq \varphi(x,y,z),\eqno \hspace{5.8cm}(2.30)$$
for all $x,y,z \in X$ and. If there exists $L=L(j)<1$ such that
the function
$$x\mapsto\psi(x)=\varphi(\frac{x}{2},0,0),\eqno \hspace{8cm}$$
has the property
$$\psi(x)\leq L~.~2^{2j}~.~\psi(\frac{x}{2^j}),\eqno \hspace{7cm}(2.31)\hspace{.05cm}$$
for all $x \in X,$ then there exists a unique quadratic function
$Q:X\rightarrow Y$ such that
$$\|f(x)-Q(x)\| \leq \frac{L^{\frac{j+1}{2}}}{c^2(1-L)}~\psi(x), \eqno \hspace{5.7cm}(2.32)\hspace{.05cm}$$
for all $x \in X.$
\end{thm}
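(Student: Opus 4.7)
The plan is to apply the fixed point alternative (Theorem 2.2) to a suitable contraction on a space of functions whose unique fixed point will be the desired quadratic $Q$. First I would introduce the set $\Omega=\{g:X\to Y\mid g(0)=0\}$ equipped with the generalized metric
$$d(g,h)=\inf\{K\in[0,\infty]\,:\,\|g(x)-h(x)\|\leq K\psi(x)\ \text{for all}\ x\in X\},$$
which, as in [17], makes $(\Omega,d)$ a complete generalized metric space. On it I would consider the operator $T:\Omega\to\Omega$ defined by $(Tg)(x)=2^{-2j}g(2^jx)$; concretely $Tg(x)=\tfrac14 g(2x)$ when $j=1$ and $Tg(x)=4g(x/2)$ when $j=-1$. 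The fixed points of $T$ are precisely the functions satisfying $g(2x)=4g(x)$, which any quadratic function does.

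Next I would verify that $T$ is strictly contractive with Lipschitz constant $L$. If $\|g(x)-h(x)\|\leq K\psi(x)$ for all $x$, then, using hypothesis (2.31) in the form $\psi(2^jx)\leq L\cdot 2^{2j}\cdot\psi(x)$ (obtained by substituting $2^jx$ for $x$ in (2.31)),
$$\|Tg(x)-Th(x)\|\leq \frac{K}{2^{2j}}\,\psi(2^jx)\leq KL\,\psi(x),$$
so $d(Tg,Th)\leq L\cdot d(g,h)$. To obtain an initial estimate for $d(f,Tf)$, I would feed $y=z=0$ into the definition of $\Delta_f$ and invoke (2.30); since $f(0)=0$, the inequality collapses to $\|c^2f(2x)-4c^2f(x)\|\leq\varphi(x,0,0)$, i.e.
$$\bigl\|f(x)-\tfrac14 f(2x)\bigr\|\leq \tfrac{1}{4c^2}\varphi(x,0,0)=\tfrac{1}{4c^2}\psi(2x)\quad\text{and}\quad \bigl\|f(x)-4f(x/2)\bigr\|\leq \tfrac{1}{c^2}\psi(x).$$
Applying (2.31) once in the case $j=1$ to convert $\psi(2x)$ into $4L\,\psi(x)$, both cases combine uniformly into $d(f,Tf)\leq L^{(j+1)/2}/c^2<\infty$.

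Theorem 2.2 then applies: the sequence $\{T^nf\}$ converges to a fixed point $Q\in\Omega$ of $T$, which is the unique fixed point of $T$ in $\Lambda=\{g\in\Omega:d(f,g)<\infty\}$, and $d(f,Q)\leq\frac{1}{1-L}d(f,Tf)\leq\frac{L^{(j+1)/2}}{c^2(1-L)}$; unwinding the definition of $d$ gives precisely the estimate (2.32). It remains to show $Q$ is quadratic. Since $Q(x)=\lim_{n\to\infty} 2^{-2nj} f(2^{nj}x)$, hypothesis (2.29) together with (2.30) yields
$$\|\Delta_Q(x,y,z)\|=\lim_{n\to\infty}\frac{1}{2^{2nj}}\|\Delta_f(2^{nj}x,2^{nj}y,2^{nj}z)\|\leq\lim_{n\to\infty}\frac{1}{2^{2nj}}\varphi(2^{nj}x,2^{nj}y,2^{nj}z)=0,$$
so $Q$ satisfies (1.2) and is quadratic by Theorem 2.1. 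Uniqueness of $Q$ as a quadratic function satisfying (2.32) follows because any such competitor lies in $\Lambda$ and is a fixed point of $T$, forcing equality by the uniqueness clause of Theorem 2.2.

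The main obstacle I anticipate is the bookkeeping needed to handle the two cases $j=\pm1$ uniformly so that the exponent $L^{(j+1)/2}$ and the direction of the iteration $2^jx$ align correctly; completeness of $(\Omega,d)$ is standard, and the contractivity of $T$ is essentially hard-coded into the assumption (2.31).
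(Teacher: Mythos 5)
Your proposal is correct and follows essentially the same route as the paper: the same space $\Omega$ with the $\psi$-weighted generalized metric, the same operator $Tg(x)=2^{-2j}g(2^jx)$, the same initial estimate $d(f,Tf)\leq L^{(j+1)/2}/c^2$ obtained from $y=z=0$ in (2.30), and the same use of (2.29) to show the fixed point is quadratic via Theorem 2.1. No substantive differences.
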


\begin{proof}
Consider the set
$$\Omega=\{g|~ ~g:X \rightarrow Y,~ g(0)=0\}\eqno \hspace{7.8cm}$$
and introduce the generalized metric on $\Omega,$
$$d(g,h)=d_{\psi}(g,h)=\inf\{K \in (0,\infty):~\|g(x)-h(x)\| \leq K \psi(x),~x \in X\}. \eqno \hspace{2cm}$$
It is easy to see that $(\Omega,d)$ is complete. Now we define a
function $T:\Omega\rightarrow\Omega$ by
$$T~g(x)=\frac{1}{2^{2j}}~~g(2^j x)\eqno \hspace{9.2cm}$$
for all $x \in X.$ Note that for all$~g,h\in \Omega,$
\begin{align*}
d (g,h)< K &~\Rightarrow~~\|g(x)-h(x)\| \leq K\psi(x),~~x\in X,
\\&~\Rightarrow~~\|\frac{1}{2^{2j}}g(2^j x)-\frac{1}{2^{2j}}h(2^j x)\|\leq
\frac{1}{2^{2j}}~K~\psi(2^j x),~~x\in X,
\\&~\Rightarrow~~\|\frac{1}{2^{2j}}g(2^j x)-\frac{1}{2^{2j}}h(2^j x)\|\leq
L~K~\psi(x),~~x\in X,
\\&~\Rightarrow~~d(T~g,T~h)\leq L~K.
 \hspace {7cm}
\end{align*}
Hence, we see that
$$d(T~g,T~h)\leq L~d(g,h),\eqno \hspace{8.6cm}$$
for all $g, h\in \Omega$, that is, $T$ is a strictly self-mapping
of $\Omega$ with the Lipschitz constant $L$. Putting $y=z=0$ in
(2.30), we have
$$\|c^2f(2x)-4c^2f(x)\| \leq \varphi(x,0,0), \eqno \hspace{6.1cm}(2.33)$$
now, by using (2.31) with the case $j=1$, we obtain that
$$\|f(x)-\frac{1}{2^2}f(2x)\| \leq\frac{1}{c^2}\frac{1}{2^2}~\varphi(x,0,0)
=\frac{1}{c^2}\frac{1}{2^2}~\psi(2x)\leq \frac{L}{c^2}~\psi(x),
\eqno \hspace{2.6cm}$$ for all $x\in X$, that is,
$d(f,Tf)\leq\frac{L}{c^2}<\infty$.

If we substitute $x=\frac{x}{2}$ in (2.33) and use (2.31) with the
case $j=-1$, then we see that
$$\|f(x)-2^2f(\frac{x}{2})\| \leq \frac{1}{c^2}~\psi(x), \eqno \hspace{7.3cm}$$
for all $x\in X$, that is, $d(f,Tf)\leq\frac{1}{c^2}<\infty$.

Now, from the fixed point alternative in both cases, it follows
that there exists a fixed point $Q$ of $T$ in $\Omega$ such that
$$Q(x)=\lim_{n\rightarrow\infty}\frac{1}{2^{2nj}}f(2^{nj}x) \eqno \hspace{7cm}(2.34)\hspace{.1cm}$$
for all $x\in X$ since $\lim_{n\rightarrow\infty}d(T^nf,Q)=0.$

Also, if we replace $x,y$ and $z$ by $2^{nj}~x, 2^{nj}~y$ and
$2^{nj}~z$ in (2.30), respectively, and divide by $2^{2nj}$, then it
follows from (2.29) and (2.34) that
$$\|\Delta_Q(x,y,z)\|=
\lim_{n\rightarrow\infty}\frac{1}{2^{2nj}}\|\Delta_f(2^{nj}x,2^{nj}y,2^{nj}z)\|\leq
\lim_{n\rightarrow\infty}\frac{1}{2^{2nj}}\varphi(2^{nj}x,2^{nj}y,2^{nj}z)=0,
\eqno \hspace{0cm}$$ for all $x,y,z\in X$, so $\Delta_Q(x,y,z)=0$.
By Theorem 2.1, the function $Q$ is quadratic.

According to the fixed point alterative, since $Q$ is the unique
fixed point of $T$ in the set
$~\Lambda=\{g\in\Omega:d(f,g)<\infty\},Q$ is the unique function
such that
$$\|f(x)-Q(x)\| \leq K~\psi(x), \eqno \hspace{7.5cm}$$
for all $x\in X$ and $K>0$. Again using the fixed point
alterative, gives
$$d(f,Q) \leq \frac{1}{1-L}d(f,Tf)\leq \frac{L^{\frac{j+1}{2}}}{c^2(1-L)}, \eqno \hspace{5.7cm}$$
thus we conclude that
$$\|f(x)-Q(x)\| \leq \frac{L^{\frac{j+1}{2}}}{c^2(1-L)}~\psi(x), \eqno \hspace{6.3cm}$$
this completes the proof.\end{proof}

From Theorem 2.3, we obtain the following Corollary concerning the
Hyers-Ulam-Rassias stability [22] of the functional equation (1.2).

\begin{cor}\label{t2} Suppose that $j\in\{-1,1\}$ be fixed, and  $p\geq0$ be
given with $p\neq2$. Assume that $\varepsilon\geq0$ is fixed. Let
$f:X\rightarrow Y$  a function such that
$$\|\Delta_f(x,y,z)\|\leq\varepsilon({{\|x\|}^p}+{{\|y\|}^p}+{{\|z\|}^p}), \eqno \hspace {4.5cm}(2.35)\hspace{.1cm} $$
for  all $x,y,z \in X.$ Further, assume that $f(0)=0$ in (2.35)
for the case $p>2$. Then there exists a unique quadratic function
$Q:X \rightarrow Y$ such that
$$\|f(x)-Q(x)\|\leq\frac{j\varepsilon}{c^2(4-2^p)}{{\|x\|}^p}, \eqno \hspace {5.2cm}(2.36)\hspace{.15cm} $$
 for all $x\in X,$ where $pj<2j$.
\end{cor}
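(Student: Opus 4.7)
The plan is to derive the corollary as a direct application of Theorem 2.3 with the specific choice $\varphi(x,y,z)=\varepsilon(\|x\|^{p}+\|y\|^{p}+\|z\|^{p})$. With this $\varphi$, the auxiliary function from the theorem is $\psi(x)=\varphi(x/2,0,0)=\varepsilon\,2^{-p}\|x\|^{p}$, so first I would record
\[
\psi\!\left(\tfrac{x}{2^{j}}\right)=\varepsilon\,2^{-p}\,2^{-jp}\|x\|^{p},
\]
and compare it with $\psi(x)$ to read off the contraction constant forced by (2.31), namely $L\geq 2^{(p-2)j}$. This gives a legitimate Lipschitz constant $L<1$ exactly when $pj<2j$, which explains the range restriction in the corollary's hypothesis and tells me to choose $L=2^{p-2}$ when $j=1$ (the case $p<2$) and $L=2^{2-p}$ when $j=-1$ (the case $p>2$).

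Next I would verify hypothesis (2.29). Since $\varphi(2^{nj}x,2^{nj}y,2^{nj}z)=\varepsilon\,2^{njp}(\|x\|^{p}+\|y\|^{p}+\|z\|^{p})$, the ratio $2^{-2nj}\varphi(2^{nj}x,2^{nj}y,2^{nj}z)$ equals $\varepsilon\,2^{nj(p-2)}(\|x\|^{p}+\|y\|^{p}+\|z\|^{p})$, which tends to $0$ as $n\to\infty$ precisely under the condition $pj<2j$ already imposed; so (2.29) is automatic. I would also briefly note that the standing assumption $f(0)=0$ needed by Theorem 2.3 is automatic when $p>0$ by evaluating (2.35) at $x=y=z=0$, and is imposed by hand in the corollary for the problematic case $p>2$ (and trivially holds when $p=0$ once one assumes it).

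With both hypotheses of Theorem 2.3 satisfied, I would plug the chosen $L$ into the conclusion (2.32). The bookkeeping splits cleanly into two cases. For $j=1$ I get $L^{(j+1)/2}=L=2^{p-2}$, so the constant becomes
\[
\frac{L}{c^{2}(1-L)}\cdot\frac{\varepsilon}{2^{p}}=\frac{2^{p-2}\varepsilon}{c^{2}(1-2^{p-2})\,2^{p}}=\frac{\varepsilon}{c^{2}(4-2^{p})},
\]
matching $\frac{j\varepsilon}{c^{2}(4-2^{p})}$ with $j=1$. For $j=-1$, $L^{(j+1)/2}=1$, and the constant simplifies to $\frac{1}{c^{2}(1-2^{2-p})\,2^{p}}\varepsilon=\frac{\varepsilon}{c^{2}(2^{p}-4)}$, which agrees with $\frac{j\varepsilon}{c^{2}(4-2^{p})}$ at $j=-1$. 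Uniqueness of $Q$ transfers from Theorem 2.3 without further work.

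The only real obstacle I anticipate is the algebraic bookkeeping of the two sign conventions: one must carefully track how $j=\pm1$ interacts with the exponents $2^{p-2}$ versus $2^{2-p}$ and with the factor $L^{(j+1)/2}$, so that both cases land on the single unified formula (2.36). Everything else is a mechanical specialization of Theorem 2.3.
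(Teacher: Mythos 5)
Your proposal is correct and takes essentially the same route as the paper: specialize Theorem 2.3 to $\varphi(x,y,z)=\varepsilon(\|x\|^{p}+\|y\|^{p}+\|z\|^{p})$, check (2.29) and (2.31) under $pj<2j$, and substitute into (2.32). Worth noting: your Lipschitz constant $L=2^{(p-2)j}$ is the right one and in fact corrects a typo in the paper, which writes $L=2^{(p-3)j}$ --- only the value $2^{(p-2)j}$ makes the computation $\frac{1}{2^{2j}}\psi(2^{j}x)=2^{(p-2)j}\psi(x)$ exact and reproduces the constant $\frac{j\varepsilon}{c^{2}(4-2^{p})}$ of (2.36), as your two-case bookkeeping for $L^{(j+1)/2}$ confirms.
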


\begin{proof}Define
$\varphi(x,y,z)=\varepsilon({{\|x\|}^p}+{{\|y\|}^p}+{{\|z\|}^p})$for
all $x,y,z \in X.$ Then the relation (2.29) is true for $pj<2j$.

Since the inequality
$$\frac{1}{2^{2j}}~\psi(2^j x)=\frac{2^{(p-3)j}}{2^p}~\varepsilon~ {\|x\|}^p
\leq2^{(p-3)j}~\psi(x), \eqno \hspace {4.7cm}$$
 for all $x\in X,$ where $pj<2j$, we see that the inequality
 (2.31) holds with $L=2^{(p-3)j}$. Now from (2.32), yields
 (2.36), which complete the proof of the Corollary.
\end{proof}

The following Corollary is the Hyers-Ulam stability [9] of the
functional equation (1.2).
\begin{cor}\label{t2} Assume that $\delta\geq0$ is fixed. Let
$f:X\rightarrow Y$  a function such that
$$\|\Delta_f(x,y,z)\|\leq\delta,\eqno \hspace {5.3cm} $$
for  all $x,y,z \in X.$ Then there exists a unique quadratic
function $Q:X \rightarrow Y$ such that
$$\|f(x)-Q(x)\|\leq\frac{\delta}{9c^2},\eqno \hspace {4.7cm} $$
holds for all $x\in X.$
\end{cor}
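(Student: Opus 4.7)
The plan is to obtain this Hyers-Ulam stability result as an immediate specialization of Theorem~2.3, taking the control function to be the constant $\varphi(x,y,z)\equiv\delta$. The pointwise bound (2.30) then coincides with the hypothesis of the corollary, and the asymptotic decay condition (2.29), which reads $\delta/2^{2nj}\to 0$, forces the choice $j=1$ (with $j=-1$ the quantity $\delta\cdot 2^{2n}$ diverges).

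With $\psi(x)=\varphi(x/2,0,0)=\delta$ constant, the contraction requirement (2.31) reduces to $\delta\le 4L\delta$, which is satisfied by any $L\in[1/4,1)$; I would take $L=1/4$ to optimize the resulting constant. One technical preliminary is that Theorem~2.3 requires $f(0)=0$, whereas the hypothesis here only controls $f(0)$: substituting $x=y=z=0$ into the inequality gives $\|6c^2 f(0)\|\le\delta$. I would handle this by working with the translate $\tilde f(x)=f(x)-f(0)$, verifying that $\tilde f(0)=0$ and that $\|\Delta_{\tilde f}(x,y,z)\|\le\delta'$ for some $\delta'=O(\delta)$, applying Theorem~2.3 to $\tilde f$, and absorbing the $\|f(0)\|$ error into the final estimate.

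Plugging $j=1$, $L=1/4$, and $\psi\equiv\delta$ into the conclusion (2.32) produces a bound of order $\delta/(3c^2)$. The corollary's sharper constant $\delta/(9c^2)$ is the main obstacle: closing the remaining factor of three would require refining the initialization of the fixed-point iteration built into the proof of Theorem~2.3. The natural candidates are the substitution $(x,x,0)$ in $\Delta_f$, which yields $\|2c^2(f(2x)-4f(x))\|\le\delta$ and improves the iteration to $\delta/(6c^2)$, or a combination exploiting the approximate evenness derived in Theorem~2.1 together with an additional averaging over substitutions. Identifying the optimal substitution and feeding it into a variant of the fixed-point argument is the crux of reaching the stated constant exactly; uniqueness of $Q$ and its quadratic character are then inherited directly from Theorem~2.3.
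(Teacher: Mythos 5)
Your route is in substance the paper's route: the paper disposes of this corollary in one line by invoking Corollary 2.4 with $p=0$ and $\varepsilon=\delta/3$, which is exactly Theorem 2.3 applied to a constant control function with $j=1$ and $L=2^{-2}$. The factor of three you could not close is not a missing trick on your part; it is an error in the paper. The step of Theorem 2.3 that initializes the iteration is (2.33), $\|c^2f(2x)-4c^2f(x)\|\le\varphi(x,0,0)$, and the paper's choice $\varepsilon=\delta/3$ amounts to claiming $\varphi(x,0,0)=\varepsilon=\delta/3$ there, i.e.\ the two terms $\|0\|^{p}$ are silently discarded in computing $\psi(x)=\varphi(\tfrac{x}{2},0,0)$ in the proof of Corollary 2.4. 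But the actual hypothesis $\|\Delta_f(x,y,z)\|\le\delta$ only yields $\|c^2f(2x)-4c^2f(x)\|\le\delta$, not $\le\delta/3$, so the honest output of the method is precisely your $\frac{\delta}{3c^2}$. Your refinement via the substitution $(x,x,0)$, which gives $\Delta_f(x,x,0)=2c^2\bigl(f(2x)-4f(x)\bigr)$ with no $f(0)$ term and hence the bound $\frac{\delta}{6c^2}$, is a genuine improvement that the paper does not make.

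Moreover, the stated constant $\frac{\delta}{9c^2}$ is not merely unproved but false as the corollary is phrased. Take $f\equiv k$ constant with $\|k\|=\frac{\delta}{6c^2}$; then $\Delta_f(x,y,z)=-6c^2k$, so $\|\Delta_f\|\le\delta$, while any quadratic $Q$ at finite uniform distance from $f$ is bounded, hence (since $Q(2^nx)=4^nQ(x)$) identically zero, giving $\inf_Q\sup_x\|f(x)-Q(x)\|=\frac{\delta}{6c^2}>\frac{\delta}{9c^2}$. This also shows that your concern about $f(0)$ is not cosmetic: Theorem 2.3 assumes $f(0)=0$, Corollary 2.5 does not, and the counterexample lives exactly in that gap. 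In short, your analysis is correct; the constant your argument actually delivers, $\frac{\delta}{6c^2}$, is sharp (the constant function attains it), and you should state the result with that constant rather than search for a substitution reaching $\frac{\delta}{9c^2}$, which does not exist.
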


\begin{proof}Letting $p=0$ and $\varepsilon=\frac{\delta}{3}~$and
applying Corollary 2.4.
\end{proof}

{\small


}

\begin{thebibliography}{99}


\bibitem{Ac}  J. Aczel and J. Dhombres, {\it Functional Equations in
 Several Variables},
Cambridge Univ. Press, 1989.


\bibitem{C J} I.-S. Chang, Y.-S. Jung, {\it  Stability of a
functional equation deriving from cubic and quadratic functions,}
J. Math. Anal. Appl. 283 (2003) 491--500.

\bibitem{Ch} P. W. Cholewa, {\it Remarks on the stability of functional
equations,} Aequationes Math. 27 (1984) 76--86.



\bibitem {Cz} S. Czerwik, {\it On the stability of the quadratic mapping in
 normed
spaces}, Abh. Math. Sem. Univ. Hamburg 62 (1992) 59--64.

\bibitem{Ga} Z. Gajda, {\it On stability of additive mappings,}
 Internat. J. Math. Math. Sci. 14(1991) 431-434.

\bibitem{G} P. G\v avruta, {\it A generalization of the
 Hyers-Ulam-Rassias
stability of approximately additive mappings,} J. Math. Anal. Appl.
184 (1994) 431--436.


\bibitem {Gr} A. Grabiec, {\it The generalized Hyers--Ulam stability of a
 class of
functional equations,} Publ. Math. Debrecen 48 (1996) 217--235.


\bibitem{H-I-R} D. H. Hyers, G. Isac and Th. M. Rassias, {\it Stability of
 Functional
Equations in Several Variables,} Birkhauser, Basel, 1998.


\bibitem{Hy} D. H. Hyers, {\it On the stability of the linear
 functional
equation,} Proc. Natl. Acad. Sci. 27 (1941) 222--224.



\bibitem{I-R1} G. Isac and Th. M. Rassias, {\it On the Hyers-Ulam
 stability of $\psi$-additive
mappings,} J. Approx. Theory 72(1993),131-137.


\bibitem{J-K} K.-W. Jun, H.-M. Kim, {\it The generalized Hyers-Ulam-Rassias
 stability of a cubic functional equation,}
J. Math. Anal. Appl. 274 (2002) 867--878.


\bibitem{J2} S.-M. Jung, {\it On the Hyers-Ulam
 stability of the functional equations that have the quadratic
 property,} J. Math. Inequal.
 Anal. 222 (1998), 126-137.

\bibitem{J2} S.-M. Jung, L.-S. Chang, {\it The stability of a cubic type functional equation
with The fixed point alternative,} J. Math. Anal. Appl. 306 (2005)
752--760.


\bibitem {k} Pl. Kannappan, {\it Quadratic functional equation and inner
 product
spaces,} Results Math. 27 (1995) 368--372.



\bibitem {M} B. Margolis, J.B. Diaz, {\it A fixed
point Theorem of the alternative for contractions on the generalized
complete metric space,} Bull. Amer. Math. Soc.126 74(1968) 305--309


\bibitem {M1-M2} M. Mirzavaziri and M. S. Moslehian, {\it A fixed
point approach to stability of a quadratic equation,} Bull. Braz.
Math. Soc. 37 (2006), 361-376.


\bibitem {R} V. Radu, {\it The fixed
point alternative and the stability of functional equations,
in:Seminar on fixed point Theory Cluj-Napoca,} vol. IV, 2003, in
press.


\bibitem {R0} J. M. Rassias, {\it Solution of a quadratic stability Hyers-Ulam
type problem,} Ricerche Mat. 50 (2001), 9-17


\bibitem {R00} J. M. Rassias, {\it Approximation of approximately
quadratic mapping by quadratic mappings,} Ann. Math. Sil. 15
(2001), 67-78.

\bibitem {R1} Th. M. Rassias (Ed.), {\it Functional Equations and
 Inequalities,} Kluwer
Academic, Dordrecht, 2000.


\bibitem {R6} Th. M. Rassias, {\it On the stability of functional
 equations and a
problem of Ulam,} Acta Math. Appl. 62 (2000) 23--130.



\bibitem {R3} Th. M. Rassias, {\it On the stability of the linear
 mapping in Banach
spaces,} Proc. Amer. Math. Soc. 72 (1978) 297--300.


\bibitem {R4} Th. M. Rassias, {\it On the stability of quadratic functional
 equation and its applictions,} Studia Univ. Babes-Bolyai
 XLIII(1998), 89-124.

\bibitem {R5} Th. M. Rassias, {\it On the stability of functional
 equations in Banach spaces,} J. Math. Anal. Appl. 251 (2000) 264--284.


\bibitem {S} F. Skof, {\it Proprietà locali e approssimazione di operatori}, Rend. Sem. Mat. Fis. Milano, 53 (1983),. 113–129.


\bibitem {Ul} S. M. Ulam, {\it Problems in Modern Mathematics,} Chapter
 VI, Science Ed.,
Wiley, New York, 1940.


















\end{thebibliography}
\end{document}